\documentclass[11pt, a4paper]{article}
\usepackage[utf8]{inputenc}
\usepackage{amsmath, amssymb, amsthm}
\usepackage{graphicx,caption,cite,float}
\usepackage{geometry}
\usepackage{physics}
\usepackage{hyperref}

\title{Integrability of Billiards Inside Cones as a Discrete-Time Hamiltonian System}
\author{Andrey E. Mironov and Siyao Yin}

\date{}

\newtheorem{theorem}{Theorem}

\newtheorem{lemma}{Lemma}

\newtheorem{corollary}{Corollary}

\begin{document}

\maketitle
\let\thefootnote\relax\footnotetext{The work is supported by the Mathematical Center in Akademgorodok under the agreement No.~075-15-2025-348 with the Ministry of Science and Higher Education of the Russian Federation.}

\begin{abstract}
    In this paper, we continue to study billiards inside  cones $K\subset \mathbb{R}^n$ over strictly convex closed $C^3$ manifolds with non-degenerate second fundamental form. Recently we proved that the billiard is superintegrable, i.e., the billiard admits first integrals whose values uniquely determine all billiard trajectories. In this paper we prove that this billiard system admits $n-1$ independent first integrals in involution. Consequently, the system is completely integrable as a discrete-time Hamiltonian system.
  This provides an example of an integrable billiard where the billiard table is neither a quadric nor consists of pieces of quadrics.
\end{abstract}

\section{Introduction and Main Results}

The Birkhoff billiard is a dynamical system that studies the motion of a free particle with unit velocity $\|v\|=1$ inside a domain $U \subset \mathbb{R}^n$ with piecewise smooth boundary $\Gamma = \partial U$. 
Upon reaching the boundary, the particle reflects according to the law of geometric optics: the angle of reflection equals the angle of incidence.
A \textit{caustic} is a hypersurface such that any line tangent to it remains tangent after reflection at the boundary. 
For a planar billiard inside an ellipse $x_1^2/a^2 + x_2^2/b^2 = 1$, the domain $U$ is foliated by confocal caustics and the system admits a first integral 
$$
F = b^2 v_1^2  +   a^2 v_2^2 - (x_1 v_2 - x_2 v_1)^2.
$$
This property is central to the {Birkhoff--Poritsky conjecture}, which states that if a neighborhood of the boundary of a strictly convex smooth billiard table is foliated by caustics, the table must be an ellipse. 
While the conjecture remains open, various partial results have been established \cite{ASK,Bia,BM1,BM2,Bol,G,Kaloshin,KS,Koval,Koz,KT,Tab}. 
Notably, for a piecewise smooth billiard table, if the boundary consists of arcs of confocal quadrics (such as confocal ellipses and hyperbolas) joined together, the system still possesses a first integral and remains integrable.

In higher dimensions ($n \geq 3$), it has been established that if a $C^2$-smooth hypersurface and its caustic both have non-degenerate second fundamental forms, then the boundary must be a piece of a quadric \cite{Ber}. For the global case, Gruber \cite{Gru1} proved that among compact convex billiard tables, only ellipsoids possess caustics.
It is shown that the billiard inside an ellipsoid 
$$
\sum_{i=1}^n \frac{x_i^2}{a_i} = 1, \quad 0 < a_1 < \dots < a_n,
$$
is a completely integrable discrete-time Hamiltonian system \cite{MV, Ves}. 
Specifically, the system admits $n-1$ independent first integrals in involution $F_1, \dots, F_{n-1}$,
\begin{equation}\label{eq:ellipsoid}
    F_k(x, v) = v_k^2 + \sum_{j \neq k} \frac{(x_k v_j - x_j v_k)^2}{a_k - a_j}, \quad k = 1, \dots, n,
\end{equation}
where $\sum_{k=1}^n F_k = \|v\|^2 = 1$.
 Through a change of variables, the billiard map of an ellipsoid is equivalent to the discrete Neumann system on a sphere \cite{Ves}. For fixed values of the first integrals, the trajectories are simultaneously tangent to $n-1$ confocal quadrics; the phase space reduces to invariant Liouville tori, on which the map acts as a translation.

Our previous work \cite{MY1, MY2} considers the billiard inside a convex cone $K \subset \mathbb{R}^n$ defined by
$$
K = \{ tp \mid p \in \gamma, t > 0 \},
$$ 
where $\gamma \subset \{ x \in \mathbb{R}^n \mid x_n = 1 \}$ is a smooth $(n-2)$-dimensional submanifold.  In contrast to Berger's theorem \cite{Ber} and Gruber's theorem \cite{Gru1}, we discovered that such a cone admits a family of spherical caustics centered at the vertex (see Fig. \ref{fig:caustic}), corresponding to the quadratic first integral $I = \sum_{1 \leq i<j\leq n} (x_i v_j - x_j v_i)^2$. 
\begin{figure}[htbp]
    \begin{center}
    \includegraphics[scale=0.22]{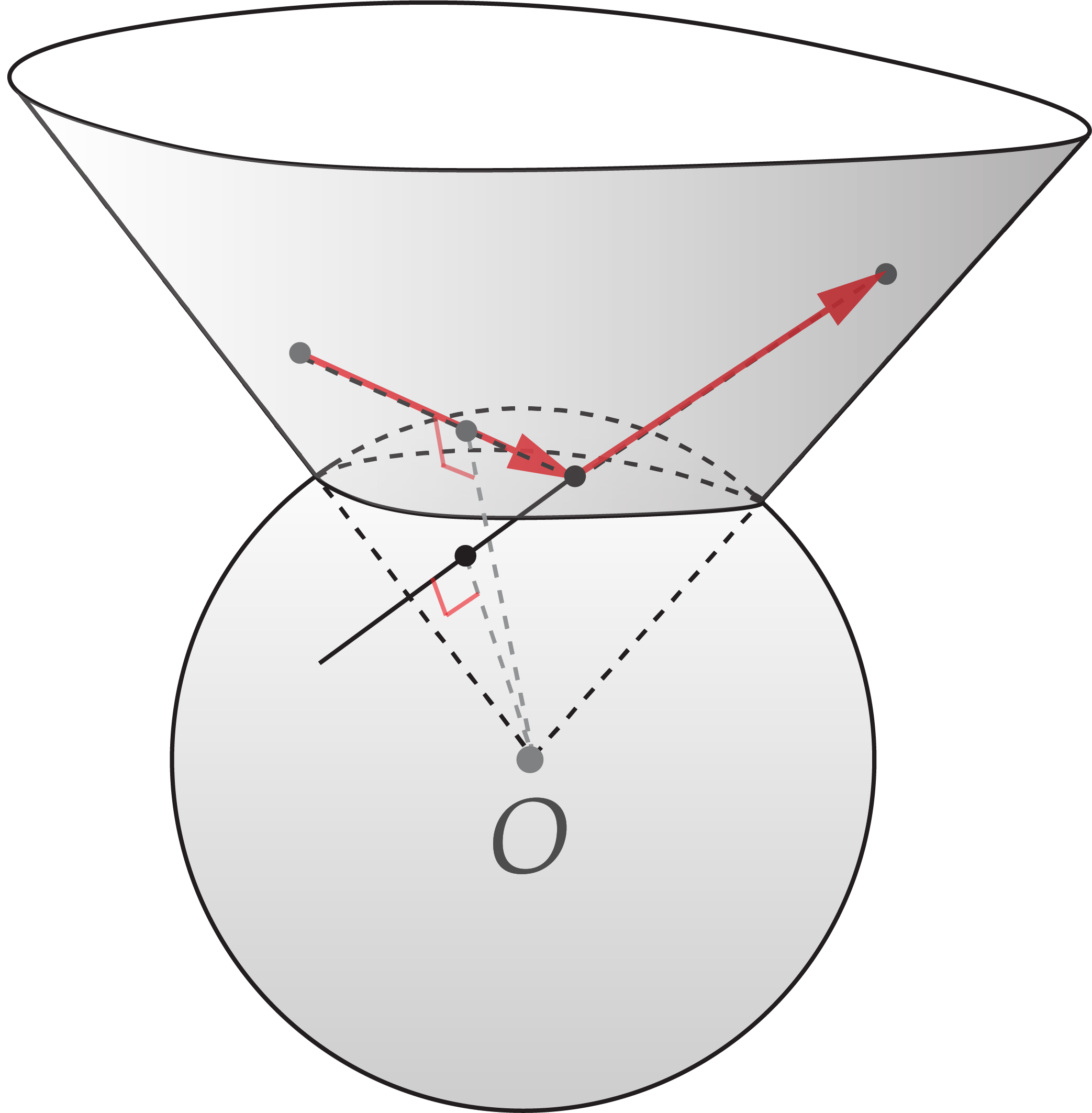}
    \end{center}
    \caption{The sphere as a caustic of the billiard inside a cone.}
    \label{fig:caustic}
  \end{figure}
If $\gamma$ is strictly convex with a non-degenerate second fundamental form, the dynamics depend significantly on the smoothness of $\gamma$: for $C^3$ cases, every trajectory has finitely many reflections, whereas for $C^2$ cases, a trajectory can undergo infinitely many reflections in finite time. In the former case, the system is shown to be superintegrable, admitting $2n-1$ first integrals that uniquely determine each trajectory.

In the present paper, we investigate the integrability of this system as a discrete-time Hamiltonian system. Throughout the following, we assume $\gamma$ is $C^3$-smooth, strictly convex, and possesses a non-degenerate second fundamental form.

To formulate our main result, we first recall the geometry of the phase space of the billiard inside $K$ (see \cite{MY1} for details).
The space of oriented lines in $\mathbb{R}^n$ is identified with the tangent bundle $TS^{n-1}$ of the unit sphere. An oriented line $l$ corresponds to a pair $(v, Q) \in TS^{n-1}$, where $v \in S^{n-1}$ is its direction and $Q \in l$ is the point closest to the origin ($\langle Q, v \rangle = 0$). 
\begin{figure}[htbp]
	\centering
	\includegraphics[width=0.67\linewidth]{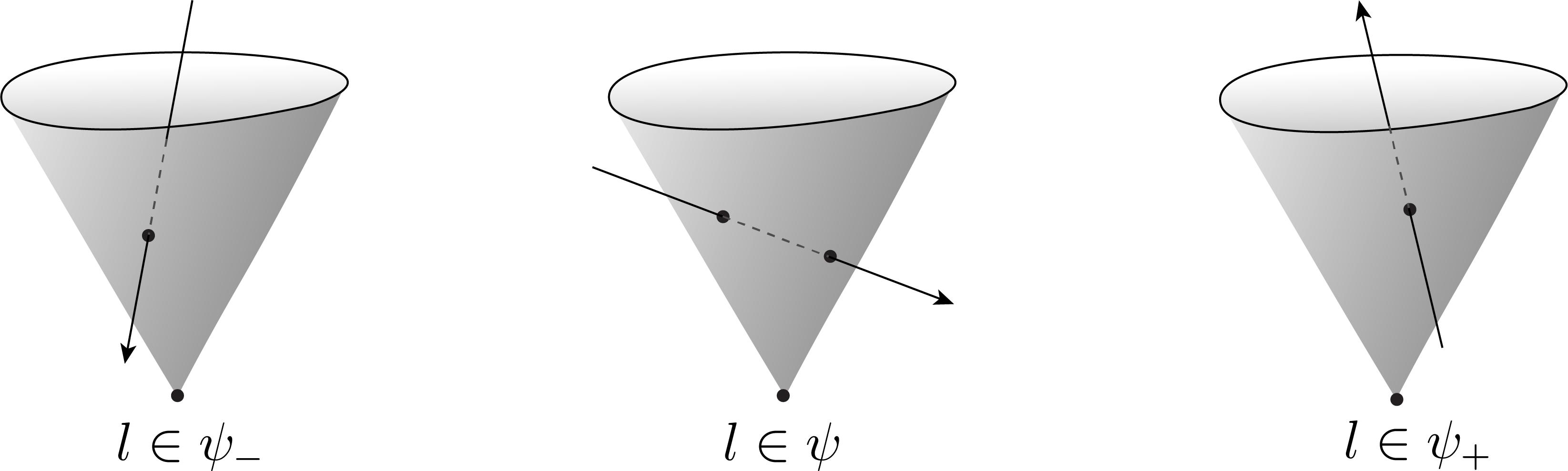}
  \caption{Oriented lines in $\psi_-$, $\psi$, and $\psi_+$. }
  \label{fig:cones-inte-s}
\end{figure}
The phase space $\Psi \subset TS^{n-1}$ is the open set of oriented lines transversally intersecting $K$, which admits a natural partition $\Psi = \psi_- \cup \psi \cup \psi_+$, where (see Fig. \ref{fig:cones-inte-s}):
\begin{itemize}
    \item $\psi_-$ consists of lines approaching from infinity and intersecting the cone exactly once;
    \item $\psi$ consists of lines intersecting the cone twice;
    \item $\psi_+$ consists of lines escaping to infinity after a single intersection.
\end{itemize}

The phase space $\Psi$ carries a natural symplectic structure inherited from the standard symplectic form $\Omega = \sum_{i=1}^n dx^i \wedge dv^i$ on $T\mathbb{R}^n \cong \mathbb{R}^{2n}$. Specifically, $\Psi$ can be identified with an open subset of the symplectic submanifold 
$$
\Psi_0 = \{ (x,v) \in \mathbb{R}^{2n} \mid \phi_1 = \|v\|^2-1=0, \, \phi_2 = \langle x, v \rangle = 0 \}
\cong TS^{n-1} \subset \mathbb{R}^{2n}.
$$
Let $\omega := \Omega|_{\Psi_0}$ be the induced symplectic form on $\Psi_0$ (and its restriction to any open subset of $\Psi_0$).
Let 
$\{\cdot, \cdot\}_{\omega}$ denote 
the Poisson bracket on $\Psi_0$ with respect to $\omega$.
For any smooth functions $f, g$ defined in a neighborhood of $\Psi_0$, the bracket $\{\cdot, \cdot\}_{\omega}$ is given in ambient coordinates by (see Lemma~\ref{lem:1})
\begin{equation}\label{eq:dirac_intro}
    \{f, g\}_{\omega} = \{f, g\} - \frac{1}{2} \left( \{f, \phi_1\} \{\phi_2, g\} - \{f, \phi_2\} \{\phi_1, g\} \right).
\end{equation}
where 
$$
\{f_1, f_2\} = \sum_{i=1}^n \left( \frac{\partial f_1}{\partial x^i}\frac{\partial f_2}{\partial v^i} - \frac{\partial f_1}{\partial v^i}\frac{\partial f_2}{\partial x^i} \right).
$$
is the Poisson bracket on $\mathbb{R}^{2n}$ with respect to $\Omega$.
In particular, for any first integrals $F$ and $G$ of the billiard system expressed in ambient coordinates, their involution with respect to $\{\cdot, \cdot\}_{\omega}$ is equivalent to that under the standard bracket $\{\cdot, \cdot\}$ (see Corollary~\ref{lem:equiv}).

The billiard map $\mu: \psi_- \cup \psi \to \psi \cup \psi_+$, which maps an incident oriented line to a reflected oriented line, is a symplectomorphism with respect to $\omega$ (Lemma \ref{lem:mu-symplectomorphism}).
Following \cite{MY1}, we define the extended billiard map 
$$
\mathcal{M}: \Psi \to \Psi
$$
 by setting 
 $$
 \mathcal{M}|_{\psi_-\cup\psi} = \mu \quad \text{and} 
 \quad 
 \mathcal{M}|_{\psi_+} = \text{id}.
 $$
 The first integrals are constructed via the ``lifting'' procedure. 
 According to Theorem~2 in \cite{MY1}, every trajectory inside $K$ undergoes a finite number of reflections. 
 For any $x \in \Psi$, let 
 $
 m(x) := \min \{ k \geq 0 \mid \mathcal{M}^k(x) \in \psi_+ \}
 $
 be the minimal number of iterates for the trajectory to enter the escaping set $\psi_+$.
  Given a function $f: \psi_+ \to \mathbb{R}$, its \textit{lift} $\tilde{f}: \Psi \to \mathbb{R}$ is defined by 
  \begin{equation}\label{eq:lift-def-intro} 
    \tilde{f}(x) := f(\mathcal{M}^{m(x)}(x)).
  \end{equation}
  Let $\psi_+^{\circ}$ denote the interior of $\psi_+$. 
  While the billiard map $\mathcal{M}$ is discontinuous on the set $\Delta_0 := \psi_+ \setminus \psi_+^{\circ}$ according to 
  Lemma~10 of \cite{MY1},
  the lift $\tilde{f}$ may have discontinuities on the set 
  $$\Delta := \bigcup_{n=0}^{\infty} \mu^{-n}(\Delta_0).
  $$ 
  Let $v^1, \dots, v^{n-1}$ be the coordinate functions on $\psi_+$ representing the first $n-1$ components of 
  the direction vector of the final oriented line of a trajectory.
 As shown in \cite{MY1}, the corresponding lifted functions $\{\tilde{v}^i\}_{i=1}^{n-1}$ are continuous on $\Psi$ and smooth on the open dense subset $\Psi \setminus \Delta$ (see Lemma 12 and Example 2 therein).
 These functions form a subset of the $2n-1$ first integrals established for the superintegrability of the system.
In the present work, we complete the dynamical picture by proving that  $\{\tilde{v}^i\}_{i=1}^{n-1}$ are in involution with respect to the symplectic structure \eqref{eq:dirac_intro}, thereby establishing complete integrability:

\begin{theorem}
    The Birkhoff billiard inside $K \subset \mathbb{R}^n$ is completely integrable as a discrete-time Hamiltonian system. Specifically, the $n-1$ first integrals $\{\tilde{v}^i\}_{i=1}^{n-1}$ obtained via the lifting procedure \eqref{eq:lift-def-intro} of $\{{v}^i\}_{i=1}^{n-1}$ are in involution with respect to the Poisson bracket $\{\cdot, \cdot\}_{\omega}$.
\end{theorem}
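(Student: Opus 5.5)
Our plan is to reduce the involution of the lifted functions to a local computation on $\psi_+$, using that $\mathcal{M}$ is a symplectomorphism. Fix a point $x \in \Psi\setminus\Delta$; by density of $\Psi\setminus\Delta$ and continuity of the $\tilde v^i$ this is enough, since the bracket is only required where the functions are smooth. Let $m = m(x)$. Because $x\notin\Delta$, none of the iterates $\mu^k(x)$, $0\le k< m$, lies in $\Delta_0$, and $\mathcal{M}^{m}(x)\in\psi_+^{\circ}$. Since $\mu$ is continuous on $\psi_-\cup\psi$ and $\psi_+^\circ$ is open, there is a neighbourhood $U$ of $x$ on which $m(\cdot)\equiv m$ and $\mathcal{M}^m|_U=\mu^m|_U$ is a smooth map into $\psi_+^\circ$. (We should check that nearby points cannot escape earlier. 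This holds because $\mu^k(x)\notin\psi_+$ for $k<m$, and the relevant sets $\psi_-\cup\psi$ avoid $\psi_+$ near those points, as $\mu^k(x)\notin\Delta_0$.) Hence on $U$ we have $\tilde v^i = v^i\circ\mu^m$.

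By Lemma~\ref{lem:mu-symplectomorphism}, $\mu^m$ is a symplectomorphism of $(U,\omega)$ onto its image, so it preserves Poisson brackets:
$$
\{\tilde v^i,\tilde v^j\}_\omega(y)=\{v^i,v^j\}_\omega(\mu^m(y)),\qquad y\in U .
$$
It therefore remains to show $\{v^i,v^j\}_\omega=0$ on $\psi_+^\circ$, where $v^i$ are simply the ambient coordinate functions restricted to $\Psi_0$. Here we use formula \eqref{eq:dirac_intro}. First, $\{v^i,v^j\}=0$ for the canonical bracket. Second, $\{v^i,\phi_1\}=\{v^i,\|v\|^2-1\}=0$, since $\phi_1$ depends only on $v$. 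Third, $\{v^i,\phi_2\}=-\partial\langle x,v\rangle/\partial x^i=-v^i$, up to sign convention. Substituting into \eqref{eq:dirac_intro}, the correction term is
$$
-\tfrac12\bigl(\{v^i,\phi_1\}\{\phi_2,v^j\}-\{v^i,\phi_2\}\{\phi_1,v^j\}\bigr)=-\tfrac12\bigl(0\cdot v^j - (-v^i)\cdot 0\bigr)=0 .
$$
So $\{v^i,v^j\}_\omega=0$ identically on $\Psi_0$. This gives $\{\tilde v^i,\tilde v^j\}_\omega=0$ on $\Psi\setminus\Delta$.

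Finally, for complete integrability we need functional independence of $\tilde v^1,\dots,\tilde v^{n-1}$ on an open dense set, together with the dimension count $\dim\Psi=2n-2$, giving $n-1$ integrals. Independence also transfers through the local diffeomorphism $\mu^m$. So it suffices that $dv^1,\dots,dv^{n-1}$ are independent on $\Psi_0$ where $v^n\neq0$; this is clear because $(v^1,\dots,v^{n-1})$ are local coordinates on $S^{n-1}$ there. On $\psi_+$ the set $\{v^n=0\}$ is nowhere dense, so independence holds on an open dense set. These functions are first integrals because $m(\mathcal{M}x)=\max(m(x)-1,0)$ and $\mathcal{M}$ is the identity on $\psi_+$, so $\tilde f\circ\mathcal{M}=\tilde f$.

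The main obstacle is the first step: rigorously showing that $m$ is locally constant and that $\mathcal{M}^m$ is smooth near every point of $\Psi\setminus\Delta$. This requires the structure of the discontinuity set from Lemma~10 of \cite{MY1}, namely that $\mu$ is smooth off $\Delta_0$ and that $\psi_-\cup\psi$ is open near non-$\Delta$ points. I would handle this by invoking Lemma~12 of \cite{MY1}, which already asserts smoothness of the lifts on $\Psi\setminus\Delta$, and extracting the local representation $\tilde v^i=v^i\circ\mu^m$ from its proof.
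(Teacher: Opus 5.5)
Your proposal is correct and follows essentially the same route as the paper. You obtain $\{v^i,v^j\}_\omega=0$ from the Dirac formula \eqref{eq:dirac} using $\{v^i,\phi_1\}=\{\phi_1,v^j\}=0$, transport it to $\Psi\setminus\Delta$ through the local symplectomorphism $\mathcal{M}^{m}=\mu^{m}$ on a neighbourhood where $m$ is constant, and get independence by pulling back $dv^1,\dots,dv^{n-1}$. The differences are minor:
\begin{itemize}
\item You verify independence on $\psi_+^\circ$ directly, since $v^1,\dots,v^{n-1}$ are local coordinates on $S^{n-1}$ where $v^n\neq 0$, rather than citing Lemma~14 of \cite{MY1}.
\item In your local-constancy remark, the condition $\mu^k(x)\notin\Delta_0$ for $k<m$ is automatic because $\Delta_0\subset\psi_+$. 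What is really used is that $\psi_-\cup\psi$ is open in $\Psi$ and $\mu$ is continuous there; the paper likewise takes this for granted.
\end{itemize}
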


Previously, all known examples of completely integrable billiards are quadrics or tables consisting of pieces of quadrics (such as ellipses, ellipsoids, or their confocal families). 
The billiard inside a cone provides the first 
example of 
completely integrable billiard that does not fall into this category. The integrability mainly follows from the 
finiteness property of the trajectories.

\section{Proof of Theorem 1}

\begin{lemma}\label{lem:1}
    \begin{itemize}
        \item[1)] The submanifold $\Psi_0 \subset \mathbb{R}^{2n}$ is a symplectic submanifold with respect to $\Omega$.
        \item[2)] For any smooth functions $f, g$ defined on a neighborhood of $\Psi_0$ in $\mathbb{R}^{2n}$, the induced Poisson bracket on $\Psi_0$, denoted by $\{f, g\}_{\omega}$, is given in ambient coordinates by
        \begin{equation}\label{eq:dirac}
                \{f, g\}_{\omega} = \{f, g\} - \frac{1}{2} \left( \{f, \phi_1\} \{\phi_2, g\} - \{f, \phi_2\} \{\phi_1, g\} \right).
        \end{equation}
    \end{itemize}
\end{lemma}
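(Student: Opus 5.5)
The plan is the standard Dirac-bracket argument for second-class constraints. Compute the Poisson brackets of the constraints $\phi_1 = \|v\|^2-1$ and $\phi_2 = \langle x, v\rangle$ with respect to $\{\cdot,\cdot\}$:
$$
\{\phi_1,\phi_2\} = \sum_i \Bigl(\frac{\partial \phi_1}{\partial x^i}\frac{\partial \phi_2}{\partial v^i} - \frac{\partial \phi_1}{\partial v^i}\frac{\partial \phi_2}{\partial x^i}\Bigr) = -\sum_i 2v^i v^i = -2\|v\|^2,
$$
which equals $-2$ on $\Psi_0$. Since this never vanishes, the $2\times 2$ constraint matrix $C_{ab}=\{\phi_a,\phi_b\}$ is invertible on $\Psi_0$, with $C^{-1} = \frac{1}{2}\begin{pmatrix}0 & 1\\ -1 & 0\end{pmatrix}$ there.

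For part 1), first note that $d\phi_1=2\sum v^i dv^i$ and $d\phi_2=\sum (v^i dx^i + x^i dv^i)$ are linearly independent wherever $v\neq 0$. Hence $\Psi_0$ is a smooth codimension-two submanifold. Its tangent space is $T\Psi_0 = \ker d\phi_1\cap\ker d\phi_2$. The $\Omega$-orthogonal complement of $T\Psi_0$ is spanned by the Hamiltonian vector fields $X_{\phi_1}, X_{\phi_2}$. The restriction of $\Omega$ to this complement is given by $\Omega(X_{\phi_1},X_{\phi_2}) = \pm\{\phi_1,\phi_2\}\neq 0$, so it is nondegenerate. Consequently $T\Psi_0\cap (T\Psi_0)^{\Omega}=0$. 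Equivalently, $X_{\phi_a}$ is not tangent to $\Psi_0$, since $d\phi_b(X_{\phi_a}) = \pm\{\phi_b,\phi_a\}$ has full rank. Therefore $\omega=\Omega|_{\Psi_0}$ is nondegenerate, and it is closed because $\Omega$ is.

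For part 2), for $f$ defined near $\Psi_0$, I will produce the vector field $X$ on $\Psi_0$ with $\iota_X\omega = d(f|_{\Psi_0})$. Take the ansatz $X = X_f - \lambda_1 X_{\phi_1} - \lambda_2 X_{\phi_2}$ and choose $\lambda_a$ so that $X$ is tangent to $\Psi_0$, i.e. $\{\phi_b, f\} - \sum_a \lambda_a\{\phi_b,\phi_a\} = 0$ for $b=1,2$. This gives $\lambda = C^{-1}$ applied to $(\{\phi_b,f\})$, which yields $\lambda_1 = -\tfrac12\{\phi_2,f\}$ and $\lambda_2 = \tfrac12\{\phi_1,f\}$ up to the sign conventions fixed by $C$. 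For $Y$ tangent to $\Psi_0$ we have $\Omega(X_{\phi_a},Y) = \pm d\phi_a(Y)=0$, so $\iota_X\omega = df|_{T\Psi_0}$ and $X$ is the Hamiltonian field of $f|_{\Psi_0}$. Then
$$
\{f,g\}_\omega = \omega(X_f^{\Psi_0}, X_g^{\Psi_0}) = dg(X) = \{f,g\} - \sum_a \lambda_a \{\phi_a, g\}.
$$
Substituting the $\lambda_a$ gives the formula \eqref{eq:dirac}. The factor $\frac12$ arises exactly from $\{\phi_1,\phi_2\} = -2$ on $\Psi_0$.

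The main obstacle is purely bookkeeping of signs: the sign convention relating $X_f$, $\Omega$ and $\{\cdot,\cdot\}$, and the ordering in $C^{-1}$. I will fix the convention $\{f,g\} = \Omega(X_f,X_g)$, or $dg(X_f)$, consistent with the given coordinate formula, by checking it on $f=x^i$, $g=v^i$. I will then verify the final expression with a test: the bracket must satisfy $\{f,\phi_a\}_\omega=0$ on $\Psi_0$ for $a=1,2$, as required of a bracket that depends only on restrictions. For example, with $g=\phi_2$ the right-hand side becomes $\{f,\phi_2\} - \tfrac12\{f,\phi_1\}\{\phi_2,\phi_2\} + \tfrac12\{f,\phi_2\}\{\phi_1,\phi_2\} = \{f,\phi_2\} - \{f,\phi_2\} = 0$, which confirms the signs and the factor $\tfrac12$.
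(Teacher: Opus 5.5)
Your proposal is correct and follows the paper's route: part 1 is the same argument via invertibility of $C_{ab}=\{\phi_a,\phi_b\}$ (you also check that $d\phi_1,d\phi_2$ are independent), and in part 2 you derive the Dirac bracket by correcting $X_f$ with $X_{\phi_1},X_{\phi_2}$ so that it becomes tangent to $\Psi_0$, whereas the paper cites the Dirac formula from Marsden--Ratiu and substitutes $C^{-1}$. One slip to fix: your tangency system $\{\phi_b,f\}=\sum_a \lambda_a\{\phi_b,\phi_a\}$ does not depend on the sign convention for Hamiltonian vector fields, and it gives $\lambda_1=\tfrac12\{\phi_2,f\}$, $\lambda_2=-\tfrac12\{\phi_1,f\}$ (the opposite of what you wrote); only these values turn $\{f,g\}-\sum_a\lambda_a\{\phi_a,g\}$ into \eqref{eq:dirac}, as your own check $\{f,\phi_a\}_\omega=0$ would reveal.
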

\begin{proof}
  1) By definition, $\Psi_0$ is a symplectic submanifold of $\mathbb{R}^{2n}$, if 
  \begin{equation}\label{eq:def-nondeg}
      T_z\Psi_0 \cap (T_z\Psi_0 )^{\Omega} = \{0\}
  \end{equation} 
  for all $z \in \Psi_0$,
where $(T_z\Psi_0 )^{\Omega}$ denotes the $\Omega$-orthogonal complement. 
  To prove this, it suffices to show that the $2\times 2$ matrix $C$, defined by 
  $$
  C_{ab} = \{\phi_a, \phi_b\},
  $$
 is invertible on $\Psi_0$.

 A direct computation yields
$$
\{\phi_1, \phi_2\} = \sum_{i=1}^n \left( \frac{\partial \phi_1}{\partial x^i}\frac{\partial \phi_2}{\partial v^i} - \frac{\partial \phi_1}{\partial v^i}\frac{\partial \phi_2}{\partial x^i} \right) = \sum_{i=1}^n \big( 0 \cdot x^i - 2v^i \cdot v^i \big) = -2\|v\|^2.
$$
           Restricted to $\Psi_0$, we have $\|v\|^2 = 1$, hence $\{\phi_1, \phi_2\}|_{\Psi_0} = -2 \neq 0$. The matrix 
           $$
           C = \begin{pmatrix} 0 & -2 \\ 2 & 0 \end{pmatrix}
           $$
           is invertible, confirming that $\Psi_0$ is a symplectic submanifold.

           For completeness, we briefly recall why the invertibility of $C$ guarantees \eqref{eq:def-nondeg}. 
    Since $\Psi_0$ is defined by the independent constraints $\phi_1 = 0$ and $\phi_2 = 0$, 
    we have
    \begin{equation}\label{eq:tangent}
        T_z\Psi_0 = \{ Y \in T_z\mathbb{R}^{2n} \mid d\phi_1(Y) = 0, \, d\phi_2(Y) = 0 \}. 
    \end{equation}           
    Let $X_{\phi_1}$, $X_{\phi_2}$ be the Hamiltonian vector fields associated with $\phi_1$ and $\phi_2$ respectively, defined by
    \begin{equation}\label{eq:Ham-def}
        \Omega(X_{\phi_i}, Y) = d\phi_i(Y)
        \quad \text{~for~all~} Y \in T_z\mathbb{R}^{2n},
        \quad i=1,2.
    \end{equation} 
    Combining \eqref{eq:tangent} and \eqref{eq:Ham-def}, it follows that
        $$
        \Omega(X_{\phi_i}, Y) = 0, \quad 
        \text{for~all~} Y\in  T_z\Psi_0, 
        \quad i=1,2.
        $$ 
        Therefore
        $$ 
        \text{span}\{X_{\phi_1}, X_{\phi_2}\} \subseteq (T_z\Psi_0)^{\Omega}. 
        $$
        The non-degeneracy of $\Omega$ implies $\dim(T_z\Psi_0) + \dim((T_z\Psi_0)^{\Omega}) = 2n$. 
        Hence
       $\dim((T_z\Psi_0)^{\Omega}) = 2$. 
        Since $X_{\phi_1}$ and $X_{\phi_2}$ are linearly independent, we know   
        $$ 
        \text{span}\{X_{\phi_1}, X_{\phi_2}\} = 
        (T_z\Psi_0)^{\Omega}. 
        $$
    Now, let 
    $$
    W = c_1 X_{\phi_1} + c_2 X_{\phi_2}
     $$
    be an arbitrary vector in the intersection $T_z\Psi_0 \cap (T_z\Psi_0 )^{\Omega}$.  
    From 
    $$
    d\phi_i(W) = 0, \quad i=1,2,
    $$
    it follows that 
    \begin{equation}\label{eq:system}
     \begin{cases}
         c_1 d\phi_1(X_{\phi_1}) + c_2 d\phi_1(X_{\phi_2}) = 0 \\
         c_1 d\phi_2(X_{\phi_1}) + c_2 d\phi_2(X_{\phi_2}) = 0
     \end{cases}.
 \end{equation}
 Using 
 $d\phi_i(X_{\phi_j}) = \{\phi_j, \phi_i\}$, the system \eqref{eq:system} is $C \begin{pmatrix} c_1 \\ c_2 \end{pmatrix} = 0$. Since $C$ is invertible, the only solution is $c_1 = c_2 = 0$, implying $W = 0$. This establishes \eqref{eq:def-nondeg} and completes the proof.

    2)
    The induced Poisson bracket on $\Psi_0$ is given by the Dirac's bracket formula (see, e.g., \cite[Section 8.5]{MaR}):
    $$
    \{f, g\}_{\omega} = \{f, g\} - \sum_{a,b=1}^2 \{f, \phi_a\} (C^{-1})_{ab} \{\phi_b, g\},
    $$
    where 
    $$
    C^{-1} 
    =\begin{pmatrix} 0 & 1/2 \\ -1/2 & 0 \end{pmatrix}.
    $$ 
    By substituting the inverse matrix $C^{-1}$ into the Dirac's bracket formula, we obtain \eqref{eq:dirac}.

    Lemma~\ref{lem:1} is proved.
\end{proof}

Since the phase space $\Psi$ is an open subset of $\Psi_0$, it follows immediately from Lemma~\ref{lem:1} that:

\begin{corollary}\label{cor:1}
    $\Psi$ is a symplectic manifold with the symplectic form $\omega := \Omega|_{\Psi}$. The corresponding Poisson bracket, denoted by $\{\cdot, \cdot\}_{\omega}$, is given by the expression in \eqref{eq:dirac}.
\end{corollary}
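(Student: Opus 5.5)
The statement to prove is Corollary~\ref{cor:1}: $\Psi$ is symplectic with $\omega=\Omega|_{\Psi}$, and its bracket is given by \eqref{eq:dirac}. The plan is to reduce everything to Lemma~\ref{lem:1} by the fact that all the relevant notions are local.

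\textbf{Step 1: $\Psi$ is open in $\Psi_0$.} By construction $\Psi\subset\Psi_0$, via the identification of oriented lines with pairs $(x,v)=(Q,v)$ satisfying $\|v\|=1$ and $\langle Q,v\rangle=0$. I would check that $\Psi$ is open there. Transversal intersection of a line with the cone $K$ is an open condition on $(Q,v)$, since $\gamma$ is $C^3$ and transversality is stable under small perturbations of the line. Hence $\Psi$ is an open subset, and therefore an open submanifold, of $\Psi_0$. For $z\in\Psi$ this gives $T_z\Psi=T_z\Psi_0$.

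\textbf{Step 2: nondegeneracy.} By part 1) of Lemma~\ref{lem:1}, $T_z\Psi_0\cap(T_z\Psi_0)^{\Omega}=\{0\}$ at every point of $\Psi_0$. By Step 1 the same holds at every point of $\Psi$. The form $\omega=\Omega|_{\Psi}$ is closed, because pullback commutes with $d$ and $d\Omega=0$. So $\omega$ is a symplectic form on $\Psi$, and it coincides with the restriction of the form on $\Psi_0$.

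\textbf{Step 3: the bracket.} The Poisson bracket of a symplectic manifold is determined pointwise by $\omega_z$ and by $df_z,dg_z$ restricted to $T_z\Psi$. Since $T_z\Psi=T_z\Psi_0$ and $\omega_z$ agrees with the form on $\Psi_0$, the bracket on $\Psi$ is the restriction of the bracket on $\Psi_0$. Part 2) of Lemma~\ref{lem:1} then gives formula \eqref{eq:dirac} for smooth $f,g$ defined near $\Psi_0$, evaluated at points of $\Psi$. If $f,g$ are only defined near $\Psi$, I would first multiply them by a bump function to extend them smoothly near any given point. This does not change the value of the bracket at that point, because the formula only involves first derivatives at the point.

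\textbf{Main obstacle.} The only step needing any care is the openness of $\Psi$ in $\Psi_0$, which must be justified from transversality of the intersection with $K$.
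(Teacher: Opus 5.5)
Your proposal is correct and is essentially the paper's argument: the paper derives the corollary in one line from Lemma~\ref{lem:1}, using only that $\Psi$ is open in $\Psi_0$. Your Steps 2--3 (closedness of the restricted form, $T_z\Psi=T_z\Psi_0$, the pointwise nature of the bracket, and the bump-function extension) make that deduction explicit, while your Step 1 argues for the openness of $\Psi$, which the paper simply builds into its definition of the phase space.
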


In the billiard literature, first integrals are conventionally expressed in the ambient coordinates of $\mathbb{R}^{2n}$, for example, the integrals for the ellipsoid \eqref{eq:ellipsoid}.
For any such integral $f$ defined in ambient coordinates,
$$
\{f, \phi_1\} = 2 \sum_{i=1}^n v_i \frac{\partial f}{\partial x_i} = 2 \langle \nabla_x f, v \rangle =0 ,
$$
since $f$ is constant along the straight-line trajectories between reflections. Hence, the correction term in \eqref{eq:dirac} vanishes, yielding the following.

\begin{corollary} \label{lem:equiv}
    Let $f_1, f_2$ be first integrals of the billiard system, expressed as smooth functions in the ambient coordinates of $\mathbb{R}^{2n}$. 
   Then, the induced Poisson bracket $\{f_1, f_2\}_\omega$ on $\Psi$ coincides with the standard Poisson bracket $\{f_1, f_2\}$ on $\mathbb{R}^{2n}$
    $$
    \{f_1, f_2\}_{\omega} = \{f_1, f_2\}.
    $$
    In particular, $f_1$ and $f_2$ are in involution with respect to $\omega$ if and only if they are in involution with respect to the standard symplectic structure $\Omega$.
\end{corollary}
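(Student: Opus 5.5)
The plan is to compare the two brackets directly through the Dirac formula \eqref{eq:dirac} of Corollary~\ref{cor:1} and to show that the correction term vanishes identically on $\Psi$. That term is
$$
\tfrac12\left(\{f_1,\phi_1\}\{\phi_2,f_2\}-\{f_1,\phi_2\}\{\phi_1,f_2\}\right),
$$
and each of its two products contains a factor of the form $\{f_i,\phi_1\}$, up to sign. It therefore suffices to prove that $\{f,\phi_1\}=0$ on $\Psi$ for every first integral $f$ written in ambient coordinates; the brackets with $\phi_2$ need not be computed at all.

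Since $\phi_1=\|v\|^2-1$ does not depend on $x$, the coordinate formula for $\{\cdot,\cdot\}$ gives
$$
\{f,\phi_1\}=\sum_i \frac{\partial f}{\partial x^i}\frac{\partial \phi_1}{\partial v^i}=2\langle \nabla_x f, v\rangle .
$$
This is twice the derivative $\frac{d}{dt}\big|_{t=0} f(x+tv,v)$, i.e. the derivative of $f$ along the free (straight-line) motion. Here I will make explicit what \emph{first integral expressed in ambient coordinates} means: $f$ is a smooth function on a neighborhood of $\Psi_0$ that is invariant under the reflection and constant along the free motion between reflections, so $f(x+tv,v)=f(x,v)$ for small $t$. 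This is the case for \eqref{eq:ellipsoid} and for the integral $I$. Differentiating at $t=0$ then gives $\langle\nabla_x f,v\rangle=0$, hence $\{f,\phi_1\}=0$. Substituting into \eqref{eq:dirac} yields $\{f_1,f_2\}_\omega=\{f_1,f_2\}$ at every point of $\Psi$, and the statement about involution follows at once.

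The only delicate point is that the curve $t\mapsto(x+tv,v)$ leaves $\Psi_0$, since $\langle x+tv,v\rangle=t$, so invariance along it is not literally a condition on $\Psi$. I would resolve this by the interpretation above: the ambient $f$ is required to be constant along lines in a neighborhood of $\Psi_0$, which is exactly how such integrals are written in the literature. Alternatively, one can observe that points $(x+tv,v)$ represent the same oriented line, so any function of the line extended to a neighborhood of $\Psi_0$ in this way satisfies the invariance automatically. Apart from this point, the argument is a direct substitution.
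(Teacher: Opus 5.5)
Your argument is correct and is essentially the paper's own: both compute $\{f,\phi_1\}=2\langle\nabla_x f,v\rangle$, note that this vanishes because $f$ is constant along straight-line motion, and conclude that the correction term in \eqref{eq:dirac} drops out. Your remark that $(x+tv,v)$ leaves $\Psi_0$, so $f$ must be assumed invariant under the free flow on a neighborhood of $\Psi_0$, makes explicit a hypothesis the paper uses only implicitly.
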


The following lemma establishes that the billiard map of cone $K$ preserves the symplectic structure; this is a general property that holds for Birkhoff billiards defined inside any convex smooth hypersurface.

\begin{lemma} \label{lem:mu-symplectomorphism}
    The billiard map $\mu: \psi_- \cup \psi \to \psi_+ \cup \psi$ is a symplectomorphism, i.e., $\mu^* \omega = \omega$.
\end{lemma}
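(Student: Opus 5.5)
To prove that $\mu^*\omega=\omega$, I would use the standard generating-function argument for billiards. It reduces the statement to the identity $\sum_i dx^i\wedge dv^i$ restricted to lines, combined with the fact that reflection preserves the Liouville one-form up to an exact term.

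\textbf{The one-form and its shift-invariance.} I work on $\Psi_0\subset\mathbb{R}^{2n}$ with $\omega=\Omega|_{\Psi_0}$. On $T\mathbb{R}^n$ put $\lambda=\sum_i v^i\,dx^i$, so that $-d\lambda=\sum_i dx^i\wedge dv^i=\Omega$.

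Let $Q$ be the foot point of an oriented line and consider any point on it, $x=Q+tv$ with $\|v\|=1$. Then
$$\lambda|_{x}=\langle v,dQ\rangle+dt+t\langle v,dv\rangle=\langle v,dQ\rangle+dt.$$
Hence shifting the base point along the line changes $\lambda$ by an exact form, and $d\lambda$ is the same whichever point of the line is used to represent it. This is exactly why $\omega$ on the space of lines is well defined. It also lets me compute $\omega$ at any convenient point of the line, in particular at the reflection point.

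\textbf{Reflection preserves $\lambda$ on the boundary.} Take an incident line $(v,Q)\in\psi_-\cup\psi$. Let $q\in\partial K$ be its exit point, a smooth function of $(v,Q)$ on the open set where the line meets $K$ transversally. Let $n(q)$ be the unit normal, and let the reflected line have direction
$$v'=v-2\langle v,n\rangle n.$$
Represent the incident line by the point $(q,v)$ and the reflected line by $(q,v')$. Since $dq$ is tangent to $\partial K$, we have $\langle v'-v,dq\rangle=-2\langle v,n\rangle\langle n,dq\rangle=0$. Therefore
$$\langle v',dq\rangle=\langle v,dq\rangle.$$

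Now relate these to the canonical representatives. Write $\lambda=\langle v,dQ\rangle$ for the incident line and $\lambda'=\langle v',dQ'\rangle$ for the reflected one. By the shift formula from the previous step,
$$\langle v,dq\rangle=\langle v,dQ\rangle+dt,\qquad \langle v',dq\rangle=\langle v',dQ'\rangle+dt',$$
where $t,t'$ are the signed distances from $Q,Q'$ to $q$. Combining these gives
$$\mu^*\lambda'-\lambda=d(t-t'),$$
which is exact. Applying $d$ yields $\mu^*\omega=\omega$, because $\omega$ is $-d$ of the restricted form $\langle v,dQ\rangle$ on $\Psi_0$; note that $\langle v,dv\rangle=0$ there.

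\textbf{Remaining checks.}
\begin{itemize}
\item The map $\mu$ is smooth ($C^2$ suffices, since $K$ is $C^3$ away from the vertex) on the open set where lines hit $\partial K$ transversally and avoid the vertex, and it lands in $\psi\cup\psi_+$.
\item The image of $\mu$ lies in $\Psi_0$: $\|v'\|=1$, and $Q'$ is the foot point of the reflected line.
\item Since $\mu$ preserves the symplectic form, it is a local diffeomorphism. Its inverse is reflection along the time-reversed line, so $\mu$ is a symplectomorphism onto its image.
\end{itemize}

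The main obstacle is the smooth dependence of the hit point $q$ on the line. This follows from the implicit function theorem because of transversality, but it requires care near the vertex and near tangential lines. Those lines are excluded from $\Psi$, or lie in $\Delta_0$, where only the openness of the transversal set matters.
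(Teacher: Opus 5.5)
Your argument is correct and is essentially the paper's proof. Both represent the incident and reflected lines at the reflection point $q$, use $\|v\|=1$ and $\langle v,dv\rangle=0$ to move the base point along the line, and use the fact that $v-v'$ is normal to $K$ while $dq$ is tangent. The only difference is that you work with the primitive $\lambda=\sum_i v^i\,dx^i$ rather than the two-form. This makes the reflection step immediate, whereas the paper differentiates and uses symmetry of the Hessian of $F$ to get $\sum_i dq^i\wedge(dv^i-dv'^i)=0$. It also gives the slightly stronger statement that $\mu^*\lambda'-\lambda=d(t-t')$ is exact.
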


\begin{proof}
    Assume $K \subset \mathbb{R}^n$ is locally defined as the zero locus $F^{-1}(0)$ of a smooth function $F$ 
  with $\nabla F \neq 0$.    
    Consider an arbitrary point $(x, v) \in \psi_- \cup \psi$ and its image $(x', v') = \mu(x, v)$ under the billiard map $\mu$. 
    We recall that $x$ (resp. $x'$) represents the point on the oriented line closest to the origin, i.e., $\langle x, v \rangle = 0$ (resp. $\langle x', v' \rangle = 0$).
    Let $q \in K$ be the corresponding reflection point on $K$. Then we have the relations
    \begin{equation}\label{eq:q-v-t}
    q = x + tv = x' + t'v',
\end{equation}
    \begin{equation}\label{eq:v-vp}
        v - v' = \alpha \nabla F(q), \quad \alpha \neq 0,
    \end{equation}
  where $t, t'$ are smooth functions on $\psi_- \cup \psi$.

   Let $(q^1,\dots,q^n)$ be the coordinates of $q$ in $\mathbb{R}^n$. From \eqref{eq:q-v-t} we have
   $$
   q^i = x^i + tv^i
   $$ 
   taking the exterior derivative of which yields 
   $$
   dq^i = dx^i + t\,dv^i + v^i\,dt.
   $$ 
   We then have
    $$
    \sum_{i=1}^n dq^i \wedge dv^i = \sum_{i=1}^n dx^i \wedge dv^i + t \sum_{i=1}^n dv^i \wedge dv^i + 
    dt \wedge \left(\sum_{i=1}^n v^i dv^i\right) .
   $$
    Since $\sum dv^i \wedge dv^i = 0$ by antisymmetry and $\sum v^i dv^i = \frac{1}{2} d(\|v\|^2) = 0$ on $\Psi$, the last two terms vanish, leaving
    \begin{equation}\label{eq:dvdq1_final}
    \sum_{i=1}^n dq^i \wedge dv^i = \sum_{i=1}^n dx^i \wedge dv^i.
    \end{equation}
    Similarly,  we have:
    \begin{equation}\label{eq:dvdq2}
    \sum_{i=1}^n dq^i \wedge dv'^i = \sum_{i=1}^n dx'^i \wedge dv'^i.
    \end{equation}

    From \eqref{eq:v-vp}, 
 it follows
    $$
    dv^i - dv'^i = \alpha d \left(\frac{\partial F(q)}{\partial q^i}\right) + \frac{\partial F(q)}{\partial q^i} d \alpha. 
    $$
    Taking the wedge product with $dq^i$ and summing over $i$, we obtain
    $$
    \sum_{i=1}^n d q^i  \wedge \left(dv^i - dv'^i \right) = \alpha \sum_{i,j=1}^n \frac{\partial^2 F(q)}{\partial q^i \partial q^j} dq^i \wedge dq^j +
    \left(\sum_{i=1}^n \frac{\partial F(q)}{\partial q^i} dq^i\right)
    \wedge
    d\alpha.
    $$
    The first term on the right-hand side vanishes because the Hessian $\frac{\partial^2 F}{\partial q^j \partial q^i}$ is symmetric while $dq^j \wedge dq^i$ is antisymmetric. 
    The second term vanishes because $F(q) \equiv 0$ for 
    $q\in K$, which implies $\sum_{i=1}^n \frac{\partial F}{\partial q^i} dq^i = 0$.
    Thus
    \begin{equation}\label{eq:dvdq3}
    \sum_{i=1}^n dq^i \wedge dv^i - \sum_{i=1}^n dq'^i \wedge dv'^i = 0.
\end{equation}
    Combining \eqref{eq:dvdq1_final}, \eqref{eq:dvdq2}, and \eqref{eq:dvdq3}, it follows that
    $$
    \sum_{i=1}^n dv^i \wedge dx^i = \sum_{i=1}^n dv'^i \wedge dx'^i,
    $$
    which is exactly $\mu^* \omega = \omega$. 

    Lemma \ref{lem:mu-symplectomorphism} is proved.
\end{proof}

From  Lemma~\ref{lem:mu-symplectomorphism} it follows:

\begin{lemma} \label{lem:4}
    The extended billiard map $\mathcal{M}: \Psi \to \Psi$ preserves the symplectic structure on the open dense subset $\Psi \setminus \Delta_0 = (\psi_- \cup \psi) \cup \psi_+^\circ$, where $\Delta_0$ denotes the set of discontinuities of $\mathcal{M}$. That is, the relation 
    $$
    \mathcal{M}^* \omega = \omega
    $$
    holds on $\Psi \setminus \Delta_0$.
\end{lemma}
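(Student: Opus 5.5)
The plan is to split $\Psi\setminus\Delta_0$ into its two open pieces, $\psi_-\cup\psi$ and $\psi_+^{\circ}$, and check $\mathcal{M}^*\omega=\omega$ on each piece separately. Pullback of a differential form is a pointwise operation: at a point $z$, $(\mathcal{M}^*\omega)_z$ depends only on the germ of $\mathcal{M}$ at $z$. It therefore suffices to show that near every point of $\Psi\setminus\Delta_0$ the map $\mathcal{M}$ is smooth and agrees with a symplectomorphism.

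On the open set $\psi_-\cup\psi$, the extended map $\mathcal{M}$ coincides with $\mu$ by definition. Lemma~\ref{lem:mu-symplectomorphism} then gives $\mu^*\omega=\omega$ there, and hence $\mathcal{M}^*\omega=\omega$. Here I need to confirm that $\psi_-\cup\psi$ is open in $\Psi$ and that $\mu$ is smooth on it. Both follow from transversality of the intersection: the parameters $t,t'$ in \eqref{eq:q-v-t} depend smoothly on the line, by the implicit function theorem applied to $F(x+tv)=0$ with $\langle\nabla F,v\rangle\neq0$.

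On the open set $\psi_+^{\circ}$, we have $\mathcal{M}=\mathrm{id}$ on an open neighbourhood of each point. The identity map trivially satisfies $\mathrm{id}^*\omega=\omega$.

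Since $\Psi\setminus\Delta_0=(\psi_-\cup\psi)\cup\psi_+^{\circ}$ and this is a union of open sets, the two local statements combine to give the claim on all of $\Psi\setminus\Delta_0$. Density of $\Psi\setminus\Delta_0$ comes from $\Delta_0=\psi_+\setminus\psi_+^{\circ}$ being the boundary of $\psi_+$, which has empty interior; alternatively it can be taken from Lemma~10 of \cite{MY1}.

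The only step that needs any care is the openness of $\psi_-\cup\psi$ in $\Psi$, equivalently that $\psi_+$ is relatively closed up to its boundary $\Delta_0$. I would obtain this from the transversality of intersections, which defines $\Psi$, so that the first intersection point persists under small perturbations of the line.
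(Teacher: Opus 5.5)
Your proposal is correct and matches the paper's proof: you split $\Psi\setminus\Delta_0$ into the open pieces $\psi_-\cup\psi$ and $\psi_+^{\circ}$. On the first, $\mathcal{M}=\mu$ is symplectic by Lemma~\ref{lem:mu-symplectomorphism}, and on the second, $\mathcal{M}=\mathrm{id}$. Your extra justifications are sound: openness of $\psi_-\cup\psi$ via persistence of a transversal exit crossing, and density via $\Delta_0$ being the boundary of the closed set $\psi_+$. The paper simply takes both points for granted.
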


\begin{proof}
    The subset $\Psi \setminus \Delta_0$ is the union of two disjoint open subsets: $\psi_- \cup \psi$ and $\psi_+^\circ$. We examine the pull-back of the symplectic form on each subset:
    
    On $\psi_- \cup \psi$, the map $\mathcal{M}$ is defined by the billiard map $\mu$. By Lemma \ref{lem:mu-symplectomorphism}, $\mu$ is a symplectomorphism, which implies $\mathcal{M}^* \omega = \omega$.
    
    On $\psi_+^\circ$, the map $\mathcal{M}$ is the identity map. Thus, it satisfies $\mathcal{M}^* \omega = \omega$.

    Lemma \ref{lem:4} is proved.
\end{proof}

Recall that by $v=(v^1,\dots,v^n)$ we denote the ending vector of the billiard trajectory, 
where the $v^j$ are functions on $\psi_+$, and by $\tilde{v}^j$ we denote the lifted functions on $\Psi$ (see Introduction). 
As we have mentioned previously, the $\tilde{v}^j$ are continuous first integrals on $\Psi$ and smooth on $\Psi \setminus \Delta$. 
We have the following lemma.

\begin{lemma}\label{lem:2}
    The coordinate functions satisfy $\{v^i, v^j\}_{\omega} = 0$ for all $i, j = 1, \dots, n$.
\end{lemma}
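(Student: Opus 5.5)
We compute directly with the Dirac bracket formula \eqref{eq:dirac} from Lemma~\ref{lem:1}, taking $f=v^i$ and $g=v^j$ as ambient coordinate functions on $\mathbb{R}^{2n}$. These functions are defined on all of $\mathbb{R}^{2n}$, so they are certainly smooth in a neighborhood of $\Psi_0$, and Corollary~\ref{cor:1} lets us evaluate $\{v^i,v^j\}_\omega$ on $\Psi$ (in particular on $\psi_+$) by the ambient expression. We will show that each term of \eqref{eq:dirac} vanishes or that the terms cancel.

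\textbf{Step 1.} The standard bracket gives $\{v^i,v^j\}=0$, since neither function depends on $x$.

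\textbf{Step 2.} We evaluate the brackets with the constraints. Because $v^i$ has no $x$-dependence and $\phi_1=\|v\|^2-1$ has no $x$-dependence either, we get $\{v^i,\phi_1\}=0$. This already kills the first correction term $\{v^i,\phi_1\}\{\phi_2,v^j\}$. For the second term we need $\{\phi_1,v^j\}$, which vanishes for the same reason. Thus both products in the correction vanish identically.

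\textbf{Step 3.} Substituting into \eqref{eq:dirac} gives $\{v^i,v^j\}_\omega=0-\tfrac12(0-\{v^i,\phi_2\}\cdot 0)=0$ for all $i,j=1,\dots,n$.

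There is no serious obstacle here. The only point needing care is to justify using ambient extensions: the bracket on $\Psi_0$ is independent of the chosen extension, and the Dirac formula is valid for any smooth extension, so the natural coordinate functions $v^i$ may be used directly. As a sanity check, one could note that the $\{v^i\}$ are components of the $S^{n-1}$ base coordinate in $TS^{n-1}\cong\Psi_0$ under the induced form $\omega=\sum dx^i\wedge dv^i|_{\Psi_0}$. Base functions of a cotangent-type bundle Poisson-commute, which is consistent with the computation.
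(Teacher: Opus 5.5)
Your proof is correct and matches the paper's argument. You note that $\{v^i,v^j\}=0$ and $\{v^i,\phi_1\}=\{\phi_1,v^j\}=0$ because none of these functions depends on $x$, so both products in the correction term of \eqref{eq:dirac} vanish; the cotangent-bundle remark is a consistency check and is not needed for the proof.
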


\begin{proof}
    We first observe that $\{v^i, v^j\} = 0$ in the ambient space. Furthermore, for $i=1, \dots, n$,
    $$
    \{v^i, \phi_1\} = \{v^i, \sum_{k=1}^n (v^k)^2 - 1\} = 0.
    $$
    Applying formula \eqref{eq:dirac} to $f=v^i$ and $g=v^j$:
    $$
    \{v^i, v^j\}_{\omega} = \{v^i, v^j\} - \frac{1}{2} \left( \{v^i, \phi_1\} \{\phi_2, v^j\} - \{v^i, \phi_2\} \{\phi_1, v^j\} \right)=0.
    $$
  
    Lemma~\ref{lem:2} is proved.
\end{proof}

From  Lemma~\ref{lem:2} it follows:

\begin{lemma} \label{lem:vi-poisson-commute}
    The first integrals $\tilde{v}^1, \dots, \tilde{v}^n$ are in involution on $\Psi$, i.e.,
$$
        \{\tilde{v}^i, \tilde{v}^j\}_{\omega} = 0, \quad \forall i, j = 1, \dots, n.
$$
\end{lemma}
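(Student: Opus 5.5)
The plan is to compute the bracket locally on the open dense set $\Psi\setminus\Delta$, where all $\tilde v^i$ are smooth. There we will write $\tilde v^i$ as the pull-back of $v^i$ by a fixed symplectic map. Then Lemma~\ref{lem:2} transfers from $\psi_+$ to $\Psi$ by naturality of the Poisson bracket under symplectomorphisms.

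\emph{Local description of the lift.} Fix $z\in\Psi\setminus\Delta$ and set $k=m(z)$. The set $\Delta$ contains $\mu^{-j}(\Delta_0)$ for every $j$, so none of the points $z,\mathcal M(z),\dots,\mathcal M^{k}(z)$ lies in $\Delta_0$. Hence $\mathcal M^{k}(z)\in\psi_+^\circ$, and the intermediate points $\mathcal M^{j}(z)$, $j<k$, lie in the open set $\psi_-\cup\psi$. By continuity of $\mu$ on $\psi_-\cup\psi$, there is an open neighbourhood $U\ni z$ with $\mathcal M^{j}(U)\subset\psi_-\cup\psi$ for $j<k$ and $\mathcal M^{k}(U)\subset\psi_+^\circ$. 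Shrinking $U$ if needed, we get $m\equiv k$ on $U$. Indeed, points with $m<k$ would have some $\mathcal M^j(w)\in\psi_+$ for $j<k$, which contradicts $\mathcal M^j(U)\subset\psi_-\cup\psi$, and $\psi_+$ is disjoint from $\psi_-\cup\psi$. Therefore $\tilde v^i=v^i\circ\mu^{k}$ on $U$, where $\mu^k:U\to\psi_+^\circ$ is smooth.

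\emph{Transfer of the bracket.} By Lemma~\ref{lem:mu-symplectomorphism} (equivalently Lemma~\ref{lem:4}), $\mu^k$ is a symplectic local diffeomorphism on $U$: it satisfies $(\mu^k)^*\omega=\omega$, and $\mu$ is invertible, since reversing the reflected line recovers the incident one. Symplectic maps preserve Poisson brackets, so
$$\{v^i\circ\mu^k,\;v^j\circ\mu^k\}_\omega=\{v^i,v^j\}_\omega\circ\mu^k .$$
The right-hand side is $0$ by Lemma~\ref{lem:2}. Hence $\{\tilde v^i,\tilde v^j\}_\omega=0$ on $U$, and therefore on all of $\Psi\setminus\Delta$.

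\emph{Main obstacle.} The main point is the meaning of the statement on $\Delta$, where the $\tilde v^i$ are merely continuous. I would interpret the involution as holding on the open dense set $\Psi\setminus\Delta$ where the bracket is defined; this is the standard sense for piecewise smooth integrals. The only delicate step is justifying that $m$ is locally constant off $\Delta$. If boundary issues arise (for example $\mathcal M^{k}(z)$ near $\partial\psi_+$), they are excluded precisely because such points lie in $\mu^{-k}(\Delta_0)\subset\Delta$.
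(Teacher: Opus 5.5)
Your proposal is correct and follows the same route as the paper. On $\Psi\setminus\Delta$ you write $\tilde v^i=v^i\circ\mathcal{M}^{m(x)}$ with $m$ locally constant, use that the symplectic map $\mathcal{M}^{m(x)}$ preserves $\{\cdot,\cdot\}_\omega$, and conclude from Lemma~\ref{lem:2}; your explicit proof that $m$ is locally constant off $\Delta$ fills in a step the paper only asserts, and your reading of the statement on $\Delta$ matches the paper's convention of extending the vanishing bracket by zero from the dense set $\Psi\setminus\Delta$.
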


\begin{proof}
    Consider a point $x \in \Psi \setminus \Delta$. Since the function $m(x)$ is locally constant on $\Psi \setminus \Delta$, there exists a neighborhood $U$ of $x$ on which the map $\mathcal{M}^{m(x)}: U \to \mathcal{M}^{m(x)}(U)\subset \psi_+$ is a smooth symplectomorphism. By definition \eqref{eq:lift-def-intro}, the lifted functions on $U$ are given by the pull-back 
    $$
    \tilde{v}^i = (\mathcal{M}^{m(x)})^* v^i.
    $$
    Hence we have
  $$
        \{\tilde{v}^i, \tilde{v}^j\}_{\omega} = \{v^i \circ \mathcal{M}^{m(x)}, v^j \circ \mathcal{M}^{m(x)}\}_{\omega} = \{v^i, v^j\}_{\omega} \circ \mathcal{M}^{m(x)}.
$$
    According to Lemma \ref{lem:2}, the coordinate functions $v^i, v^j$ satisfy $\{v^i, v^j\}_{\omega} = 0$ on the phase space $\Psi$. It follows that 
    $$
    \{\tilde{v}^i, \tilde{v}^j\}_{\omega} = 0 \circ \mathcal{M}^{m(x)} = 0.
    $$
    This establishes that $\{\tilde{v}^i, \tilde{v}^j\}_{\omega} = 0$ on $\Psi \setminus \Delta$.

    Since the set $\Psi \setminus \Delta$ is open and dense in $\Psi$, we can extend the value of the Poisson bracket $\{\tilde{v}^i, \tilde{v}^j\}_\omega$ continuously to the entire phase space $\Psi$ by setting it identically to zero. Thus, while the first integrals $\tilde{v}^i$ are not continuously differentiable on the singular set $\Delta$, their Poisson brackets are well-defined and vanish continuously on all of $\Psi$. This completes the proof.
\end{proof}

Since the velocity vectors satisfy the unit constraint $\|v\|^2 = 1$, the $n$ first integrals $\tilde{v}^1, \dots, \tilde{v}^n$ satisfy one relation
$$
\sum_{i=1}^n (\tilde{v}^i)^2 = 1.
$$
To establish the complete integrability, we select a subset of $n-1$ integrals and verify their independence.

\begin{lemma} \label{lem:independence}
    The first integrals $\tilde{v}^1, \dots, \tilde{v}^{n-1}$ are functionally independent on $\Psi$.
\end{lemma}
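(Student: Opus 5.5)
Functional independence means that the differentials $d\tilde v^1,\dots,d\tilde v^{n-1}$ are linearly independent on an open dense subset of $\Psi$; that is the sense in which I will prove the lemma. Since the $\tilde v^i$ are smooth only on $\Psi\setminus\Delta$, I will work at points of this open dense set. Fix $x\in\Psi\setminus\Delta$. As in the proof of Lemma~\ref{lem:vi-poisson-commute}, $m$ is constant on a neighborhood $U$ of $x$. On $U$ the map $\mathcal{M}^{m(x)}:U\to\mathcal{M}^{m(x)}(U)\subset\psi_+$ is a smooth symplectomorphism, so it is a local diffeomorphism, and
$$\tilde v^i=(\mathcal{M}^{m(x)})^*v^i .$$
Hence $d\tilde v^1\wedge\dots\wedge d\tilde v^{n-1}=(\mathcal{M}^{m(x)})^*(dv^1\wedge\dots\wedge dv^{n-1})$. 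Because a pull-back by a local diffeomorphism is injective on forms, it suffices to show that $dv^1,\dots,dv^{n-1}$, restricted to $\Psi_0$, are linearly independent at the points of $\mathcal{M}^{m(x)}(U)\subset\psi_+$.

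On $\Psi_0\cong TS^{n-1}$ the direction map $(x,v)\mapsto v\in S^{n-1}$ is the bundle projection, which is a submersion. The functions $v^1,\dots,v^{n-1}$ form a local coordinate chart on $S^{n-1}$ near every point with $v^n\neq0$. Therefore, at every point of $\Psi_0$ with $v^n\neq0$, the restrictions of $dv^1,\dots,dv^{n-1}$ are independent. Explicitly, the tangent vectors $(\delta x,\delta v)$ with $\langle\delta v,v\rangle=0$, $\langle\delta x,v\rangle+\langle x,\delta v\rangle=0$ include, for each $i<n$, $\delta v=e_i-\frac{v^i}{v^n}e_n$ with a suitable $\delta x$. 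The remaining issue is to ensure $v^n\neq0$ at the relevant final lines.

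The set $\{v^n=0\}\cap\psi_+$ is a closed hypersurface in $\psi_+$, so its complement is open and dense in $\psi_+$. Its preimage under the local diffeomorphisms $\mathcal{M}^{k}$ on the pieces of $\Psi\setminus\Delta$ where $m=k$ is again nowhere dense. Taking the union over $k$ gives a set $\Psi\setminus(\Delta\cup\{\tilde v^n=0\})$ that is open and dense, and on it the differentials $d\tilde v^1,\dots,d\tilde v^{n-1}$ are independent.

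I expect the main obstacle to be this density step. The complement of the full-measure set must stay dense after the countable union over $k$. This holds because each piece $\{m=k\}\setminus\Delta$ is open, and on it $\{\tilde v^n=0\}$ is the preimage of a smooth hypersurface under a diffeomorphism, hence a closed hypersurface inside that piece. Alternatively, one can use geometry: since the cone $K$ opens in the $x_n$ direction, escaping lines heading to infinity inside the cone should have $v^n>0$, which would make $\tilde v^n>0$ on all of $\Psi$ and remove the issue entirely. I would first try to justify this positivity from the convexity of $\gamma$, and fall back on the density argument if it fails.
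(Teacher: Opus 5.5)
Your argument is correct and is essentially the paper's proof: pull back $dv^1,\dots,dv^{n-1}$ by the smooth local symplectomorphism $\mathcal{M}^{m(x)}$ on a neighborhood where $m$ is constant, then use density of $\Psi\setminus\Delta$; the only difference is that you prove independence of $dv^1,\dots,dv^{n-1}$ on the escaping set directly, via the submersion $TS^{n-1}\to S^{n-1}$ and the condition $v^n\neq 0$, whereas the paper cites Lemma~14 of \cite{MY1}. Your geometric alternative holds and makes the density fallback unnecessary: a forward ray that stays inside the cone must have $v^n>0$, since otherwise its central projection to $\{x_n=1\}$ leaves the bounded region enclosed by the compact $\gamma$, so $\tilde v^n>0$ on all of $\Psi$.
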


\begin{proof}
    First, the coordinate functions $v^1, \dots, v^{n-1}$ are functionally independent on the subset $\psi_+^{\circ}$ (this is proved in Lemma 14 of \cite{MY1}).
   
    For any $x \in \Psi \setminus \Delta$, let $U$ be a neighborhood where $m(x)$ is constant. Since the map $\mathcal{M}^{m(x)}: U \to \mathcal{M}^{m(x)}(U)\subset \psi_+^{{\circ}}$ is a local symplectomorphism, its pull-back $(\mathcal{M}^{m(x)})^*$ preserves the linear independence of differentials. It follows from
    $$
    d\tilde{v}^i = (\mathcal{M}^{m(x)})^* (dv^i), \quad i=1, \dots, n-1,
    $$
    that $\{d\tilde{v}^1, \dots, d\tilde{v}^{n-1}\}$ are linearly independent on $U$. As $\Psi \setminus \Delta$ is open and dense, the $n-1$ first integrals are functionally independent almost everywhere.

    Lemma \ref{lem:independence} is proved.
\end{proof}

Combining Lemma \ref{lem:vi-poisson-commute} and Lemma \ref{lem:independence}, we conclude that the billiard system admits $n-1$ independent first integrals in involution. This completes the proof of Theorem 1.

\vspace{1cm}

\noindent
{Andrey E. Mironov}\\
Sobolev Institute of Mathematics, Novosibirsk, Russia\\ 
Novosibirsk State University, Novosibirsk, Russia\\ 
Email: \texttt{mironov@math.nsc.ru}

\medskip
\noindent
{Siyao Yin}\\
Sobolev Institute of Mathematics, Novosibirsk, Russia\\
Email: \texttt{siyao.yin@math.nsc.ru}
\vspace{1cm}

\end{document}